\numberwithin{equation}{section}
\newtheorem{thm}{Theorem}[section]
\newtheorem{pr}[thm]{Proposition}
\newtheorem{ex}[thm]{Example}
\newtheoremstyle{case}{}{}{}{}{}{:}{ }{}
\theoremstyle{case}
\newtheoremstyle{caso}{}{}{}{}{}{:}{ }{}
\theoremstyle{caso}
\DeclareRobustCommand{\svdots}{
  \vbox{%
    \baselineskip=0.33333\normalbaselineskip
    \lineskiplimit=0pt
    \hbox{.}\hbox{.}\hbox{.}%
    \kern-0.2\baselineskip
  }%
}
\newcommand{\N}{\mathbb{N}}
\newcommand{\R}{\mathbb{R}}
\theoremstyle{remark}
\let\@@pmod\pmod
\DeclareRobustCommand{\pmod}{\@ifstar\@pmods\@@pmod}
\def\@pmods#1{\mkern4mu({\operator@font mod}\mkern 6mu#1)}
\title[On Bessenrodt-Ono type inequalities and log-concavity property]{On general approach to Bessenrodt-Ono type inequalities and log-concavity property}
\author{Krystian Gajdzica, Piotr Miska and Maciej Ulas}
\address{Institute of Mathematics \\
	Faculty of Mathematics and Computer Science \\
	Jagiellonian University in Cracow, Łojasiewicza 6, 30-348 Kraków, Poland
}
\email{krystian.gajdzica@doctoral.uj.edu.pl}
\email{piotr.miska@uj.edu.pl}
\email{maciej.ulas@uj.edu.pl}
\keywords{integer partition, log-concavity, Bessenrodt-Ono type inequality, sub-exponential function.}
\thanks{The research of the authors was supported by a grant of the National Science Centre (NCN), Poland, no. UMO-2019/34/E/ST1/00094}
\subjclass[2020]{Primary 11P82, 11P84; Secondary 05A17.}
\begin{document}

\maketitle

\begin{abstract}
In recent literature concerning integer partitions one can find many results related to both the Bessenrodt-Ono type inequalities and log-concavity property. In this note we offer some general approach to this type of problems. More precisely, we prove that under some mild conditions on an increasing function $F$ of at most exponential growth satisfying the condition $F(\N)\subset \R_{+}$, we have $F(a)F(b)>F(a+b)$ for sufficiently large positive integers $a, b$. Moreover, we show that if the sequence $(F(n))_{n\geq n_{0}}$ is log-concave and $\limsup_{n\rightarrow +\infty}F(n+n_{0})/F(n)<F(n_{0})$, then $F$ satisfies the Bessenrodt-Ono type inequality.

\end{abstract}

\section{Introduction}\label{sec1}
Let $\N$ be the set of non-negative integers, $\N_{+}$ the set of positive integers and $\R_{+}=(0,+\infty)$. Moreover, for an integer $n_{0}\geq2$ we put $N_{\geq n_{0}}=\{n\in\N:\;n\geq n_{0}\}$. 

For given $A\subset \N_{+}$ and $n\in\N$ by $p_{A}(n)$ we denote the number of partitions of $n$ with parts in the set $A$. As usual, we put $p_{A}(0)=1$. It is well known that the ordinary generating function for the sequence $(p_{A}(n))_{n\in\N}$ takes the form
$$
\sum_{n=0}^{\infty}p_{A}(n)x^{n}=\prod_{a\in A}\frac{1}{1-x^{a}}.
$$

If $A=\N_{+}$ then we simply write $p(n)$ instead of $p_{\N_{+}}(n)$. In this case the partition function is the famous Euler partition function thoroughly studied by Ramanujan and many others. The number of papers devoted to various properties of $p(n)$, or more generally, for $p_{A}(n)$ for various choice of the set $A$ is enormous. The standard reference is the book of Andrews \cite{GA2} (see also \cite{GA1} for less advanced approach).

A few years ago there emerged a broad research devoted to log-behavior of partition statistics. DeSalvo and Pak \cite{DSP} were the first who initiated this kind of study. More precisely, they investigated the log-concavity of the partition function $p(n)$. Let us recall that a sequence $(\omega_n)_{n\in\N}$ of real numbers is said to be log-concave if the inequality
$$
\omega_n^2> \omega_{n-1}\omega_{n+1}
$$ 
is valid for all sufficiently large values of $n$. On the other hand, it is called log-convex if the inverse inequality holds.

Using analytic methods DeSalvo and Pak \cite{DSP} reproved the Nicolas' theorem \cite{Nicolas} which states that the sequence $(p(n))_{n\geq 26}$ is log-concave.

Now, there is a wealth of literature devoted to the log-concavity property for other variations of the partition function. Since we are unable to present a complete list of the papers concerning the topic, let us recall at least a few of them. For instance, Bringmann, Kane, Rolen and Tripp \cite{BKRT} investigated the case of the $k$-colored partition function. On the other hand, Dawsey and Masri \cite{DM} examined the Andrews {\it spt}-function in that direction. Further, Engel \cite{Engel} proved that the overpartition function $\overline{p}(n)$ is log-concave for every $n\geq2$. Gajdzica \cite{Gajdzica} discovered a similar phenomenon for the $A$-partition function when $A$ is finite. O’Sullivan \cite{Sullivan} investigated the number of partitions into powers and proved a conjecture of Ulas \cite{U}. One can also mention that Ono, Pujahari and Rolen \cite{OPR} showed that the plane partition function satisfies the log-concavity property as well.

However, the research devoted to log-behavior of partition functions is not only bounded by the log-concavity or log-convexity properties. Another interesting phenomenon is the so-called Bessenrodt-Ono inequality. More precisely, in 2016, Bessenrodt and Ono \cite{B-O} showed that for $a, b\geq 2$ and $a+b>9$ we have
\begin{equation}\label{Theorem: Bessenrodt-Ono (1)}
p(a)p(b)>p(a+b).
\end{equation}

Their proof is based on the asymptotic estimates due to Lehmer \cite{L}. It is worth noting that there are two alternative approaches to derive the result. Alanazi, Gagola and Munagi \cite{AGM} showed the Bessenrodt-Ono inequality in combinatorial manner by constructing appropriate injections between some sets of partitions. Heim and Neuhauser \cite{HN3}, on the other hand, presented a proof which is based on the induction on $a+b$.

Moreover, the inequality \eqref{Theorem: Bessenrodt-Ono (1)} can be effectively used in practice. For instance, one can apply it to determine the value of
$$\max{p(n)}=\max{\left\{p(\boldsymbol{\lambda}):\boldsymbol{\lambda} \text{ is a partition of } n\right\}},$$
where $p(\boldsymbol{\lambda})$ denotes the extended partition function defined as $p(\boldsymbol{\lambda}):=\prod_{i=1}^jp(\lambda_i)$ for $\bf{\lambda}=(\lambda_1,\lambda_2,\ldots,\lambda_j)$. For additional information, we refer the reader to \cite{B-O}.

A lot of works concerning the Bessenrodt-Ono type inequality were published. For example, Beckwith and Bessenrodt \cite{Beckwith} found out similar properties for the $k$-regular partition function. Chern, Fu and Tang \cite{CFT} considered the $k$-colored partition function. Dawsey and Masri \cite{DM} discovered the analogous phenomenon for the Andrews {\it spt}-function. Heim, Neuhauser and Tr\"oger  \cite{HNT} examined the issue for the plane partition function. Moreover, Heim and Neuhauser widely generalized the property and investigated the so-called polynomization of the Bessenrodt-Ono inequality for a couple of partition functions \cite{KG3, HN4, HN6, HNT}. Hou and Jagadeesan \cite{HJ}, and Males \cite{M} discovered the analogues of the Bessenrodt-Ono inequality for the so-called partition rank function.

Actually, there are many more properties related to the log-behavior of partition statistics. We do not discuss them here, but focus on some general criteria for both Bessenrodt-Ono type inequalities and log-concavity problems. It turns out that\linebreak a plenty of the aforementioned results are proved using some asymptotic estimates. Therefore, it would be convenient to posses a few results which assert that a function with an appropriate growth is log-concave or fulfills the Bessenrodt-Ono inequality. Essentially, these goals were the main motivations for our investigation.

Let us describe the content of the paper in some details. In Section \ref{sec2} we get a general criterion on the sequence $(F(n))_{n\in\N}$, which guarantees that it satisfies the Bessenrodt-Ono type inequality. In other words, for all sufficiently large $a, b\in\N, a\geq b$ we have $F(a)F(b)>F(a+b)$. In particular, as an application we reprove recent result of Heim and Neuhauser which says that the plane partition function satisfies the Bessenrodt-Ono type inequality. In Section \ref{sec3} we present some conditions guaranteeing asymptotic log-concavity of the sequence $(F(n))_{n\in\N}$. As an application we reprove recent result of DeSalvo and Pak \cite[Theorem 1.1]{DSP}. Finally, in the last section, under some mild condition on the sequence $(F(n))_{n\in\mathbb{N}}$ we show that log-concavity implies the Bessenrodt-Ono type inequality. We also prove that the opposite implication is not true. More precisely,  we show that for each $m\in\N_{\geq 2}$ the $m$-ary partition function $b_{m}(n)=p_{A}(n)$, where $A=\{m^{i}:\;i\in\N\}$, satisfies the Bessenrodt-Ono type inequality but is not log-concave.

\section{Bessenrodt-Ono type inequality holds for a class of sub-exponential functions}\label{sec2}

In this section we offer a general approach to Bessenrodt-Ono type inequalities. More precisely, we prove the following general result.

\begin{thm}\label{theorem: B-O (2) modified}
    Let $c_1,c_2,f,F:\mathbb{N}_+\to\mathbb{R}_+$ be given and such that the inequalities
    \begin{align*}
        c_1(n)e^{f(n)}<F(n)<c_2(n)e^{f(n)}
    \end{align*}
    are valid for all positive integers $n\geq N_{0}$ for some $N_0\in\mathbb{N}_+$. Suppose further that the following conditions hold:
    \begin{enumerate}
        \item $\exists g:\mathbb{N}_+\to\mathbb{R}_+, N_1\in\mathbb{N}_+\text{ }\forall a\geq b\geq N_1:f(a)+f(b)-f(a+b)\geq g(b)$;
        \item $\exists  h:\mathbb{N}_+\to\mathbb{R}_+,N_2\in\mathbb{N}_+\text{ }\forall a\geq b\geq N_2: c_2(a+b)/c_1(a)\leq h(b)$;
        \item $\exists N_3\in \mathbb{N}_+\text{ }\forall n\geq N_3: g(n)\geq \log h(n)-\log c_1(n)$.
    \end{enumerate}
    Then, the inequality
    \begin{align*}
        F(a)F(b)>F(a+b)
    \end{align*}
    is satisfied for all $a,b\geq \max\{N_0, N_1, N_2, N_{3}\}$.
\end{thm}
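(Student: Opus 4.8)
The plan is to estimate $F(a)F(b)$ from below and $F(a+b)$ from above using the two-sided bounds on $F$, and then reduce the desired inequality to an algebraic comparison of the auxiliary functions $c_1,c_2,f,g,h$. First I would assume $a,b\geq\max\{N_0,N_1,N_2,N_3\}$ and, relabelling if necessary, suppose $a\geq b$ (the statement is symmetric in $a,b$, so this is harmless). Since all the indices involved are at least $N_0$, the hypotheses give
\begin{align*}
F(a)F(b)&>c_1(a)c_1(b)e^{f(a)+f(b)},\\
F(a+b)&<c_2(a+b)e^{f(a+b)}.
\end{align*}
Hence it suffices to prove
\begin{align*}
c_1(a)c_1(b)e^{f(a)+f(b)}\geq c_2(a+b)e^{f(a+b)},
\end{align*}
i.e., after dividing by $e^{f(a+b)}$ and by $c_1(a)$ (both positive),
\begin{align*}
c_1(b)e^{f(a)+f(b)-f(a+b)}\geq \frac{c_2(a+b)}{c_1(a)}.
\end{align*}

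Now I would feed in the three structural conditions. Condition $(1)$ gives $f(a)+f(b)-f(a+b)\geq g(b)$, and since $e^{x}$ is increasing this yields $e^{f(a)+f(b)-f(a+b)}\geq e^{g(b)}$; multiplying by the positive quantity $c_1(b)$ preserves the inequality, so the left-hand side is at least $c_1(b)e^{g(b)}$. Condition $(2)$ bounds the right-hand side from above by $h(b)$. Therefore the whole inequality will follow once we know
\begin{align*}
c_1(b)e^{g(b)}\geq h(b),
\end{align*}
and taking logarithms (legitimate since $c_1(b),h(b)>0$) this is exactly
\begin{align*}
\log c_1(b)+g(b)\geq \log h(b),
\end{align*}
which is a rearrangement of condition $(3)$ applied at $n=b\geq N_3$. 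Chaining the inequalities back up gives $F(a)F(b)>F(a+b)$, with the strictness coming for free from the strict inequalities in the two-sided bound on $F$.

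There is essentially no hard step here; the argument is a direct substitution followed by monotonicity of $\exp$ and $\log$. The only points requiring a little care are: keeping track of which quantities are positive so that multiplying and dividing preserves the direction of the inequalities and so that taking logarithms is valid (this is guaranteed by the assumption that $c_1,c_2,f,F,g,h$ map into $\R_{+}$); making sure every index at which a hypothesis is invoked meets the corresponding threshold, which is why the conclusion is stated for $a,b\geq\max\{N_0,N_1,N_2,N_3\}$; and using the assumption $a\geq b$ correctly, since conditions $(1)$ and $(2)$ are phrased with $a\geq b$ and with $b$ as the argument of $g$ and $h$. If anything is a potential obstacle, it is purely bookkeeping: verifying that $a+b\geq N_0$ (immediate, as $a,b\geq N_0$) so that the upper bound $F(a+b)<c_2(a+b)e^{f(a+b)}$ is available.
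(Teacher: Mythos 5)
Your proposal is correct and follows essentially the same route as the paper's proof: bound $F(a)F(b)$ below and $F(a+b)$ above via the exponential estimates, then reduce to $c_1(b)e^{g(b)}\geq h(b)$ using conditions (1)--(3). The extra bookkeeping you flag (symmetry to assume $a\geq b$, positivity for taking logarithms, and $a+b\geq N_0$) is handled implicitly in the paper and is correctly resolved in your write-up.
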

\begin{proof}
    Let us fix functions $c_1,c_2,f,F$ and $g$ as in the statement. Since both of the inequalities
    \begin{align*}
        F(a)F(b)>c_1(a)c_1(b)e^{f(a)+f(b)}
    \end{align*}
    and
    \begin{align*}
        F(a+b)<c_2(a+b)e^{f(a+b)}
    \end{align*}
    are true for any positive integers $a,b\geq N_{0}$, it is enough to prove that the following inequality
    \begin{align*}
        e^{f(a)+f(b)-f(a+b)}\geq \frac{c_2(a+b)}{c_1(a)c_1(b)}
    \end{align*}
    is satisfied for all sufficiently large values of $a$ and $b$. From ($1$), it follows that the inequality
    \begin{align*}
        f(a)+f(b)-f(a+b)\geq g(b)
    \end{align*}
    holds for every $a\geq b\geq N_1$. On the other hand, ($2$) implies that the inequality
    \begin{align*}
        \frac{c_2(a+b)}{c_1(a)}\leq h(b)
    \end{align*}
    is valid for any $a\geq b\geq N_2$. Hence, it is enough to show that the following
    \begin{align*}
        e^{g(b)}\geq h(b)/c_1(b)
    \end{align*}
    holds for all large values of $b$ --- but that is a direct consequence of ($3$), as required.
\end{proof}

Although the above theorem is very easy we show that it is strong enough to be applied to classical examples of partition functions.

\begin{ex}\label{example1}
{\rm
    Let us consider the partition function $p(n)$. It follows from the Bessenrodt and Ono's paper \cite{B-O} that the inequalities
    \begin{align*}
        \frac{\sqrt{3}}{12n}\left(1-\frac{1}{\sqrt{n}}\right)e^{\frac{\pi}{6}\sqrt{24n-1}}<p(n)<\frac{\sqrt{3}}{12n}\left(1+\frac{1}{\sqrt{n}}\right)e^{\frac{\pi}{6}\sqrt{24n-1}}
    \end{align*}
    hold for every positive integer $n$. Therefore, we will use Theorem \ref{theorem: B-O (2) modified} with $F(n):=p(n)$,
    \begin{align*}
        f(n):=\frac{\pi}{6}\sqrt{24n-1},\hspace{0.3cm} c_1(n):=\frac{\sqrt{3}}{12n}\left(1-\frac{1}{\sqrt{n}}\right)\hspace{0.3cm} \text{and}\hspace{0.3cm}c_2(n):=\frac{\sqrt{3}}{12n}\left(1+\frac{1}{\sqrt{n}}\right).
    \end{align*}
Now, if we assume that $a\geq b\geq 1$, then we have that
\begin{align*}
    f(a)+f(b)-f(a+b)&=\frac{\pi}{6}\cdot\frac{2\sqrt{24a-1}\sqrt{24b-1}-1}{\sqrt{24a-1}+\sqrt{24b-1}+\sqrt{24(a+b)-1}}\\
    &>\frac{\pi}{6}\cdot\frac{\sqrt{24a-1}\sqrt{24b-1}}{\sqrt{24a-1}+\sqrt{24b-1}}-\frac{1}{24}\\
    &\geq\frac{\pi}{12}\sqrt{24b-1}-\frac{1}{24}=:g(b)
\end{align*}
    On the other hand, one can derive that
    \begin{align*}
        \frac{c_2(a+b)}{c_1(a)}&=\frac{12a}{12(a+b)}\left(1+\frac{1}{\sqrt{a+b}}\right)\left(1+\frac{1}{\sqrt{a}-1}\right)<1+\frac{2}{\sqrt{a}-1}\leq2=:h(b)
    \end{align*}
    whenever $a\geq9$ and $b\leq a$. Hence, it is enough to observe that
    \begin{align*}
        e^{g(n)}\geq 8\sqrt{3}n\left(1+\frac{1}{\sqrt{n}-1}\right)
    \end{align*}
    is true for all sufficiently large values of $n$. In fact, one can show that the above is valid for every $n\geq 15$. Thus, Theorem \ref{theorem: B-O (2) modified} implies that the Bessenrodt-Ono inequality
    \begin{align*}
        p(a)p(b)>p(a+b)
    \end{align*}
    holds for all $a\geq b\geq15$.
    }
\end{ex}

\begin{ex}
{\rm   Let us recall that the number $pp(n)$ of plane partitions of $n$  can be computed via the generating function (obtained by MacMahon \cite{MacMa})
$$
\sum_{n=0}^{\infty}pp(n)x^{n}=\prod_{n=1}^{\infty}\frac{1}{(1-x^{n})^{n}}.
$$

From Wright's formula \cite{Wright} one can deduce the existence of $\alpha,\beta,\gamma,N>0$ such that for all $n>N$ the following inequalities
    \begin{align*}
        \alpha n^{-\frac{25}{36}}\left(1-\frac{\beta}{\sqrt{n}}\right)e^{\gamma n^{2/3}}<pp(n)<\alpha n^{-\frac{25}{36}}\left(1+\frac{\beta}{\sqrt{n}}\right)e^{\gamma n^{2/3}}
    \end{align*}
    hold. We set $F(n):=pp(n)$, 
    \begin{align*}
        f(n):=\gamma n^{2/3},\hspace{0.3cm} c_1(n):=\alpha n^{-\frac{25}{36}}\left(1-\frac{\beta}{\sqrt{n}}\right)\hspace{0.3cm} \text{and}\hspace{0.3cm}c_2(n):=\alpha n^{-\frac{25}{36}}\left(1+\frac{\beta}{\sqrt{n}}\right),
    \end{align*}
    and apply Theorem \ref{theorem: B-O (2) modified}. For every $a\geq b\geq N$, we have that
    \begin{align*}
        f(a)+f(b)-f(a+b)&=\gamma\left(a^{\frac{2}{3}}+b^{\frac{2}{3}}-a^{\frac{2}{3}}\left(1+\frac{b}{a}\right)^{\frac{2}{3}}\right)\\
        &\geq \gamma\left(a^{\frac{2}{3}}+b^{\frac{2}{3}}-a^{\frac{2}{3}}\left(1+\frac{2b}{3a}\right)\right)=\gamma\left(b^\frac{2}{3}-\frac{2b}{3a^{1/3}}\right)\geq\frac{\gamma}{3}b^\frac{2}{3},
    \end{align*}
    where the first inequality is a consequence of Bernoulli's inequality. On the other hand, it is not difficult to see that
    \begin{align*}
        \frac{c_2(a+b)}{c_1(a)}=\left(1+\frac{b}{a}\right)^{-\frac{25}{36}}\left(1+\frac{\beta}{\sqrt{a+b}}\right)\left(1+\frac{\beta}{\sqrt{a}-\beta}\right)<1+\frac{2\beta}{\sqrt{a}-\beta}\leq2\beta+1
    \end{align*}
    whenever $a\geq b\geq \max\{N,(\beta+1)^2\}$. Now, it is straightforward to deduce that
    \begin{align*}
        e^{\frac{\gamma}{3}n^\frac{2}{3}}\geq\frac{2\beta+1}{\alpha} n^{\frac{25}{36}}\left(1+\frac{\beta}{\sqrt{n}-\beta}\right)
    \end{align*}
    is satisfied for all but finitely many values of $n$. In conclusion, we get that $pp(n)$ fulfills the Bessenrodt-Ono type inequality for  all large parameters $a$ and $b$.
    }
\end{ex}

\section{Log-concavity property holds for a class of sub-exponential functions}\label{sec3}

Let us recall that a sequence $(F(n))_{n\in\N}$ (or just a function $F:\;\N\rightarrow \R_{+}$) is log-concave if
$$
F(n)^2> F(n-1)F(n+1)
$$
for all $n\geq n_0$, where $n_{0}$ is some positive integer. 

In this section we get a general result which under mild conditions on the growth of the function $F:\mathbb{N}\to\mathbb{R}_{+}$ guarantees that $F$ is log-concave.

\begin{thm}\label{theorem: log-concavity}
    Let $c_1,c_2,f,F:\mathbb{N}\to\mathbb{R}_+$ be given and such that the inequalities
    \begin{align*}
        c_1(n)e^{f(n)}<F(n)<c_2(n)e^{f(n)}
    \end{align*}
    are valid for all positive integers $n\geq N_{0}$ for some $N_0\in\mathbb{N}$. Suppose further that the following conditions hold:
    \begin{enumerate}
    \item $\exists N_1\in\mathbb{N}_+\text{ }\exists h:\mathbb{N}_+\to\mathbb{R}_{+}\text{ }\forall n\geq N_1: h(n)\leq2f(n)-f(n-1)-f(n+1)$;
    \item $\exists N_2\in\mathbb{N}_{+}\text{ }\forall n\geq N_2: c_2(n+1)c_2(n-1)/c_1^2(n)\leq e^{h(n)}$.

    \end{enumerate}
    Then, for all the values of $n\geq\max \{N_{0}, N_{1}, N_{2}\}$ the inequality
    \begin{align*}
        F^2(n)>F(n-1)F(n+1)
    \end{align*}
    is true.
\end{thm}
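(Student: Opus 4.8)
The plan is to run exactly the sandwich argument used in the proof of Theorem~\ref{theorem: B-O (2) modified}, now applied at the three consecutive arguments $n-1,n,n+1$. First I would fix $c_1,c_2,f,F,h$ as in the statement and observe that, as soon as $n-1\geq N_0$, the two-sided bounds give
\begin{align*}
    F^2(n)>c_1^2(n)e^{2f(n)}
\end{align*}
and
\begin{align*}
    F(n-1)F(n+1)<c_2(n-1)c_2(n+1)e^{f(n-1)+f(n+1)}.
\end{align*}
Hence it suffices to prove the single inequality
\begin{align*}
    c_1^2(n)e^{2f(n)}\geq c_2(n-1)c_2(n+1)e^{f(n-1)+f(n+1)},
\end{align*}
since chaining it between the two displays above yields $F^2(n)>F(n-1)F(n+1)$, the strictness being inherited from the strict lower bound on $F$.

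Next I would rewrite the reduced inequality in the equivalent multiplicative form
\begin{align*}
    e^{2f(n)-f(n-1)-f(n+1)}\geq\frac{c_2(n-1)c_2(n+1)}{c_1^2(n)}.
\end{align*}
By condition~(1), for $n\geq N_1$ the exponent on the left is at least $h(n)$, so the left-hand side is bounded below by $e^{h(n)}$; by condition~(2), for $n\geq N_2$ the right-hand side is bounded above by $e^{h(n)}$. Thus the reduced inequality holds for every $n\geq\max\{N_0,N_1,N_2\}$ (taking $n$ large enough that $n-1\geq N_0$ as well), and the theorem follows.

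The point I would stress is that, exactly as with Theorem~\ref{theorem: B-O (2) modified}, this is a soft statement with essentially no hard step: the whole content is the elementary sandwich together with the bookkeeping built into hypotheses~(1)--(2). The only things needing a little care are (i) keeping track of strictness --- it is the strict inequality $c_1(n)e^{f(n)}<F(n)$ that makes the conclusion $F^2(n)>F(n-1)F(n+1)$ strict --- and (ii) the indexing, i.e.\ ensuring all three of $n-1,n,n+1$ lie in the range where the approximation $c_1e^{f}<F<c_2e^{f}$ is available (perhaps replacing $N_0$ by $N_0+1$). The genuine effort, as in the examples of Section~\ref{sec2}, is pushed entirely into verifying the hypotheses for concrete $f,c_1,c_2$; for instance, reproving \cite[Theorem~1.1]{DSP} amounts to choosing $f(n)=\frac{\pi}{6}\sqrt{24n-1}$ together with Lehmer-type constants $c_1,c_2$ and checking that $2f(n)-f(n-1)-f(n+1)$ decays slowly enough to dominate $\log\big(c_2(n-1)c_2(n+1)/c_1^2(n)\big)$.
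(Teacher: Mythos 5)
Your proposal is correct and follows essentially the same sandwich argument as the paper's own proof: bound $F^2(n)$ below and $F(n-1)F(n+1)$ above by the exponential estimates, then reduce to comparing $e^{2f(n)-f(n-1)-f(n+1)}$ with $c_2(n-1)c_2(n+1)/c_1^2(n)$ via conditions (1) and (2). Your remark about needing $n-1\geq N_0$ is a fair (minor) point of care that the paper's statement glosses over.
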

\begin{proof}
Let $c_1,c_2,f,F,h$ be as in the statement of the theorem. It follows that both of the inequalities
\begin{align*}
        F^2(n)>c_1^2(n)e^{2f(n)}
\end{align*}
and
    \begin{align*}
        F(n-1)F(n+1)<c_2(n-1)c_2(n+1)e^{f(n-1)+f(n+1)}
    \end{align*}
are valid for every $n\geq N_{0}$. Therefore, it is enough to show that the inequality
$$
        e^{h(n)}\geq\frac{c_2(n-1)c_2(n+1)}{c_1^2(n)}
$$
is true for all sufficiently large values of $n$, but this is exactly $(2)$. Hence, we conclude that the inequality
$$
        F^2(n)>F(n-1)F(n+1)
$$
is satisfied for all $n\geq\max\{N_{0}, N_1, N_2\}$, as required.
\end{proof}

\begin{ex}
{\rm For every positive integer $n\geq 37$, we have that
\begin{align}\label{Chen's inequalities for p(n)}
    c_1(n)e^{\mu(n)}<p(n)<c_2(n)e^{\mu(n)},
\end{align}
where
\begin{align*}
    \mu(n)&:=\frac{\pi}{6}\sqrt{24n-1},\\
    c_1(n)&:=\frac{\sqrt{12}}{24n-1}\left(1-\frac{1}{\mu(n)}-\frac{1}{\mu^3(n)}\right),\\
    c_2(n)&:=\frac{\sqrt{12}}{24n-1}\left(1-\frac{1}{\mu(n)}+\frac{1}{\mu^3(n)}\right).
\end{align*}
    The inequalities ($\ref{Chen's inequalities for p(n)}$) follow directly from Chen, Jia and Wang \cite[Lemma 2.2]{Chen} and some numerical computations carried out in Wolfram Mathematica \cite{WM}.

Now, we want to apply Theorem \ref{theorem: log-concavity} to deduce the log-concavity for $p(n)$. At first, let us observe that the generalized binomial theorem asserts that for every $|j|<|n|$ the following inequalities
    \begin{align}\label{Generalized binomial for u(n)}
       t_{-j}(n)<\sqrt{n+j}<t_{+j}(n)
    \end{align}
are true, where
\begin{align*}
    t_{-j}(n)=n^\frac{1}{2}+\frac{1}{2}jn^{-\frac{1}{2}}-\frac{1}{8}j^2n^{-\frac{3}{2}}-2\cdot|j|^3n^{-\frac{5}{2}}
\end{align*}
and
\begin{align*}
    t_{+j}(n)=n^\frac{1}{2}+\frac{1}{2}jn^{-\frac{1}{2}}-\frac{1}{8}j^2n^{-\frac{3}{2}}+2\cdot|j|^3n^{-\frac{5}{2}}.
\end{align*}
    Therefore, we have that
    \begin{align*}
        2\mu(n)-\mu(n-1)-\mu(n+1)&=\frac{\sqrt{24}\pi}{6}\left(2\sqrt{n-\frac{1}{24}}-\sqrt{n-\frac{25}{24}}-\sqrt{n+\frac{23}{24}}\right)\\
        &\geq \frac{\sqrt{24}\pi}{6}\left(\frac{1}{4}n^{-\frac{3}{2}}-\frac{55588}{13824}n^{-\frac{5}{2}}\right).
    \end{align*}
    On the other hand, one can also calculate that
    \begin{align*}
        \frac{c_2(n-1)c_2(n+1)}{c_1^2(n)}&=\left(1+\frac{24^2}{(24n-1)^2-24^2}\right)^\frac{5}{2}\\
        &\times\frac{\left(\mu^3(n-1)-\mu^2(n-1)+1\right)\left(\mu^3(n+1)-\mu^2(n+1)+1\right)}{\left(\mu^3(n)-\mu^2(n)-1\right)^2}\\
        &\leq \left(1+\frac{24^2}{(24n-1)^2-24^2}\right)^\frac{5}{2}\times\left(1+\frac{24\sqrt{6}}{7\pi^3}n^{-\frac{3}{2}}\right),
    \end{align*}
    where the last inequality is a consequence of ($\ref{Generalized binomial for u(n)}$) and some elementary but tiresome computations. Hence, it is enough to check under what conditions on $n$ the inequality
    \begin{align*}
        1+\frac{\sqrt{24}\pi}{6}\left(\frac{1}{4}n^{-\frac{3}{2}}-\frac{55588}{13824}n^{-\frac{5}{2}}\right)\geq\left(1+\frac{24^2}{(24n-1)^2-24^2}\right)^\frac{5}{2}\times\left(1+\frac{24\sqrt{6}}{7\pi^3}n^{-\frac{3}{2}}\right)
    \end{align*}
    is true. One can verify that it holds for all $n\geq94$. Therefore, if we examine the positivity of $p^2(n)-p(n-1)p(n+1)$ for every $1\leq n\leq 93$, then we obtain \cite[Theorem 1.1]{DSP}.
    }
\end{ex}

\section{Log-concavity (usually) implies Bessendrodt-Ono type inequality}\label{sec4}

In this section we show a strict connection between log-concavity property and the Bessenrodt-Ono type inequality.

\begin{thm}\label{proposition: log-concavity => B-O}
Let $f:\mathbb{N}_+\to\mathbb{R}_+$ be fixed. Assume further that there exists $n_0\in\N_+$ such that $(f(n))_{n\geq n_0}$ is log-concave and
\begin{align*}
    \limsup_{n\to\infty}\frac{f(n+n_0)}{f(n)}<f(n_0).
\end{align*}
Then, the inequality
\begin{align*}
    f(a)f(b)>f(a+b)
\end{align*}
holds for all sufficiently large numbers $a, b\in\N_+$.
\end{thm}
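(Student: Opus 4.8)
The plan is to pass to logarithms and argue additively. Put $g:=\log f\colon\mathbb{N}_+\to\mathbb{R}$, which is legitimate since $f$ is positive; log-concavity of $(f(n))_{n\ge n_0}$ amounts to the increments $d(n):=g(n+1)-g(n)$ being strictly decreasing for $n\ge n_0-1$, and the desired conclusion becomes $g(a)+g(b)-g(a+b)>0$ for all large $a,b$. This being symmetric, I assume $a\ge b$ throughout. The first step is to reduce to the diagonal $a=b$: from $g(a+b)-g(a)=\sum_{i=0}^{b-1}d(a+i)$ and $g(2b)-g(b)=\sum_{i=0}^{b-1}d(b+i)$, monotonicity of $d$ together with $a\ge b$ gives $d(a+i)\le d(b+i)$ term by term, so $g(a+b)-g(a)\le g(2b)-g(b)$, i.e.
\[
g(a)+g(b)-g(a+b)\ \ge\ 2g(b)-g(2b).
\]
Thus it suffices to show $2g(b)-g(2b)>0$, i.e. $f(b)^2>f(2b)$, for all sufficiently large $b$.

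For this diagonal inequality I would perform a second smoothing step that exposes the $\limsup$ hypothesis. Telescoping gives $g(2b)-g(b)=\sum_{j=b}^{2b-1}d(j)$ and $g(b)-g(n_0)=\sum_{j=n_0}^{b-1}d(j)$, hence
\[
2g(b)-g(2b)\ =\ g(n_0)+\sum_{j=n_0}^{b-1}d(j)-\sum_{j=b}^{2b-1}d(j).
\]
For $b>n_0$ peel off the top $n_0$ terms of the last sum; the remaining block $\sum_{j=b}^{2b-1-n_0}d(j)$ has the same length $b-n_0$ as $\sum_{j=n_0}^{b-1}d(j)$ and is dominated by it term by term (again because $d$ is decreasing and the indices are shifted up), leaving
\[
2g(b)-g(2b)\ \ge\ g(n_0)-\sum_{j=2b-n_0}^{2b-1}d(j)\ =\ g(n_0)-\bigl(g(2b)-g(2b-n_0)\bigr).
\]
Finally, since $d$ is eventually decreasing, the length-$n_0$ window sum $g(N)-g(N-n_0)=\sum_{j=N-n_0}^{N-1}d(j)$ is eventually non-increasing in $N$, hence convergent, with limit $\ell:=\limsup_{n\to\infty}\bigl(g(n+n_0)-g(n)\bigr)=\log\bigl(\limsup_{n\to\infty}f(n+n_0)/f(n)\bigr)<\log f(n_0)=g(n_0)$. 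So the right-hand side above tends to the strictly positive limit $g(n_0)-\ell$ as $b\to\infty$, which yields $2g(b)-g(2b)>0$ for all large $b$; together with the reduction this proves the theorem, the final threshold on $a,b$ being the largest of $n_0+1$, the index past which $d$ is decreasing, and the point beyond which the window sums fall below $g(n_0)$.

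The argument is soft: the only thing to get right is the index bookkeeping in the two term-by-term comparisons — both instances of ``$d$ is decreasing, so raising every index lowers the sum'' — and the remark that the $\limsup$ in the hypothesis is genuinely a limit. That last point, which follows from the monotonicity of the length-$n_0$ window sums, is exactly what lets the strict inequality $\ell<g(n_0)$ propagate to all sufficiently large $b$; beyond that I anticipate no real obstacle.
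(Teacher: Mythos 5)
Your proof is correct and is essentially the paper's argument written additively: both rest on telescoping the decreasing increments $d(n)$ (equivalently, the decreasing ratios $f(n+1)/f(n)$) to shift a block of indices upward, and both finish by using the $\limsup$ hypothesis to control a leftover window of length $n_0$. The only structural difference is your preliminary reduction to the diagonal $a=b$, which the paper avoids by comparing the telescoping product over $[a+n_0,a+b]$ directly with that over $[n_0,b]$, leaving exactly the condition $f(a+n_0)/f(a)<f(n_0)$; your extra step is harmless but not needed.
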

\begin{proof}
    At first, let us observe that the log-concavity property implies that the inequality
    \begin{align}
        \frac{f(n)}{f(n-1)}>\frac{f(n+1)}{f(n)}\label{equivalence of log-concavity}
    \end{align}
    is valid for each $n\geq n_0$. Thus, it is enough to check the validity of the inequality in
    \begin{align*}
        f(a+b)&=\frac{f(a+b)}{f(a+b-1)}\times\cdots\times\frac{f(a+n_0+1)}{f(a+n_0)}f(a+n_0)\\
        &<f(a)\frac{f(b)}{f(b-1)}\times\cdots\times\frac{f(n_0+1)}{f(n_0)}f(n_0)=f(a)f(b)
    \end{align*}
    for all sufficiently large $a$ and $b$. However, \eqref{equivalence of log-concavity} points out that
    \begin{align*}
        \frac{f(n_0+i+1)}{f(n_0+i)}>\frac{f(a+n_0+i+1)}{f(a+n_0+i)}
    \end{align*}
    is true for any $i=0,1,\ldots,b-n_0-1$. Therefore, the task boils down to proving that the inequality
    \begin{align*}
        \frac{f(a+n_0)}{f(a)}<f(n_0)
    \end{align*}
    is true for all large parameters $a$ and $b$, but it is a direct consequence of the leftover assumption from the statement. This ends the proof.
\end{proof}

One can easily notice that besides Theorem \ref{proposition: log-concavity => B-O}, the above proof implies the following property.

\begin{pr}\label{corollary: log-concave => B-O}
    Let $f:\mathbb{N}_+\to\mathbb{R}_+$ be fixed. Assume further that there exists $n_0\in\N_+$ such that $(f(n))_{n\geq n_0}$ is log-concave and the inequality
\begin{align*}
    f(n)f(n_0)>f(n+n_0)
\end{align*}
is valid for every $n\geq n_0$. Then, we have that
\begin{align*}
    f(a)f(b)>f(a+b)
\end{align*}
is satisfied for all $a,b\geq n_0$.
\end{pr}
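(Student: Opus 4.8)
The plan is to recycle, almost verbatim, the argument that proves Theorem~\ref{proposition: log-concavity => B-O}: the hypothesis $f(n)f(n_0)>f(n+n_0)$ for \emph{all} $n\ge n_0$ is precisely the quantitative replacement for the $\limsup$ condition used there, and it is exactly what upgrades the conclusion from ``for sufficiently large $a,b$'' to ``for all $a,b\ge n_0$''. Indeed, the remark preceding the statement already signals that the proof of Theorem~\ref{proposition: log-concavity => B-O} does the job; the only task is to track the ranges of validity.

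Concretely, I would fix $a,b\ge n_0$ (no assumption $a\ge b$ is needed: the argument is asymmetric but valid for either ordering). As in the proof of Theorem~\ref{proposition: log-concavity => B-O}, log-concavity of $(f(n))_{n\ge n_0}$ gives $\tfrac{f(n)}{f(n-1)}>\tfrac{f(n+1)}{f(n)}$ for every admissible $n$, and iterating this inequality $a$ times yields
\begin{align*}
    \frac{f(n_0+i+1)}{f(n_0+i)}>\frac{f(a+n_0+i+1)}{f(a+n_0+i)},\qquad i=0,1,\ldots,b-n_0-1.
\end{align*}
I would then write the telescoping identity
\begin{align*}
    f(a+b)=\left(\prod_{i=0}^{b-n_0-1}\frac{f(a+n_0+i+1)}{f(a+n_0+i)}\right)f(a+n_0),
\end{align*}
bound each of its $b-n_0$ factors from above by $\tfrac{f(n_0+i+1)}{f(n_0+i)}$, and use $\prod_{i=0}^{b-n_0-1}\tfrac{f(n_0+i+1)}{f(n_0+i)}=\tfrac{f(b)}{f(n_0)}$ to obtain $f(a+b)<\tfrac{f(b)}{f(n_0)}f(a+n_0)$. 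At this point the whole statement collapses to the single inequality $f(a+n_0)<f(a)f(n_0)$, which in Theorem~\ref{proposition: log-concavity => B-O} was extracted from the $\limsup$ hypothesis and held only eventually, but which here is the standing assumption applied with $n=a\ge n_0$. Substituting gives $f(a+b)<\tfrac{f(b)}{f(n_0)}\cdot f(a)f(n_0)=f(a)f(b)$, which is the desired inequality.

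I do not anticipate a genuine obstacle; the only thing to be careful about is the index bookkeeping. One must check that the $a$-fold iteration of $\tfrac{f(n)}{f(n-1)}>\tfrac{f(n+1)}{f(n)}$ only ever invokes log-concavity at indices $\ge n_0$ (the smallest index appearing is $n_0+i+1>n_0$), that the telescoping product for $f(a+b)$ has exactly $b-n_0$ factors so that it is matched by the $b-n_0$ consecutive ratios used to dominate it, and that the degenerate case $b=n_0$ (empty product) reduces directly to the hypothesis $f(a)f(n_0)>f(a+n_0)$. With those verifications in place, nothing further is needed.
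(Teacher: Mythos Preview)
Your proposal is correct and follows exactly the approach the paper intends: the paper gives no separate proof but simply remarks that the argument for Theorem~\ref{proposition: log-concavity => B-O} already yields the proposition, and your write-up does precisely that, replacing the eventual bound coming from the $\limsup$ hypothesis by the assumed inequality $f(a)f(n_0)>f(a+n_0)$ at the final step. Your index bookkeeping (including the degenerate case $b=n_0$) is accurate and matches the telescoping chain in the paper's proof.
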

\begin{ex}
    Suppose that we know that the classical partition function satisfies
    \begin{align*}
        p(26)p(a)>p(a+26)
    \end{align*}
    for every $a\geq 26$. Then, from \cite[Theorem 1.1]{DSP} and Proposition \ref{corollary: log-concave => B-O} we get that the inequality
    \begin{align*}
        p(a)p(b)>p(a+b)
    \end{align*}
    holds for all $a,b\geq26$.
\end{ex}

\begin{thm}\label{Proposition: log-concavity => B-0 (2)}
    Let $f:\mathbb{N}\to\mathbb{R}_+$ be fixed and such that $f(0)\geq1$. Suppose further that the sequence $\left(f(n)\right)_{n=0}^\infty$ is log-concave for every positive integer $n$. Then
\begin{align*}
    f(a)f(b)>f(a+b)
\end{align*}
is true for all positive integers $a$ and $b$.
\end{thm}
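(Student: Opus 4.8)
The plan is to mimic, and slightly specialize, the telescoping argument from the proof of Theorem \ref{proposition: log-concavity => B-O}, taking the base point to be $n_0=1$ and using the normalization $f(0)\geq 1$ only at the very end. Since $f$ takes values in $\mathbb{R}_+$, all the relevant inequalities may be multiplied and divided freely. The first step is to record the standard reformulation of log-concavity: dividing $f(n)^2>f(n-1)f(n+1)$ by the positive number $f(n-1)f(n)$ shows that the ratio sequence $r(n):=f(n)/f(n-1)$ is strictly decreasing for $n\geq 1$.

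Next I would fix arbitrary positive integers $a$ and $b$ and write the telescoping identity
\begin{align*}
    f(a+b)=f(a)\prod_{i=1}^{b}\frac{f(a+i)}{f(a+i-1)}=f(a)\prod_{i=1}^{b}r(a+i).
\end{align*}
For each index $i\in\{1,\ldots,b\}$ one has $a+i>i\geq 1$, so strict monotonicity of $r$ gives $r(a+i)<r(i)$. Multiplying these $b\geq 1$ strict inequalities between positive reals yields
\begin{align*}
    f(a+b)<f(a)\prod_{i=1}^{b}r(i)=f(a)\prod_{i=1}^{b}\frac{f(i)}{f(i-1)}=f(a)\cdot\frac{f(b)}{f(0)}.
\end{align*}
Since $f(0)\geq 1$ we have $f(b)/f(0)\leq f(b)$, and therefore $f(a+b)<f(a)f(b)$, which is exactly the assertion.

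Alternatively, and essentially equivalently, one can note that the hypotheses imply those of Proposition \ref{corollary: log-concave => B-O} with $n_0=1$: the sequence $(f(n))_{n\geq 1}$ is log-concave, and from $r(n+1)<r(1)=f(1)/f(0)\leq f(1)$ one gets $f(1)f(n)>f(n+1)$ for all $n\geq 1$; that proposition then delivers $f(a)f(b)>f(a+b)$ for all $a,b\geq 1$ at once.

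I do not expect any genuine obstacle here; the statement is a short consequence of monotonicity of the ratios. The only points that need a little care are that every $f(n)$ is strictly positive, so that strict inequalities survive multiplication and division; that the strict inequality $a+i>i$ (valid because $a\geq 1$) is what upgrades the weak decrease of $r$ to a strict one, hence makes the final inequality strict; and that the assumption $f(0)\geq 1$ is invoked exactly once, at the last line, to absorb the factor $1/f(0)$.
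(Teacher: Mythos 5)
Your proposal is correct and follows essentially the same route as the paper's own proof: the same telescoping of $f(a+b)/f(a)$ into consecutive ratios, the same comparison $f(a+i)/f(a+i-1)<f(i)/f(i-1)$ coming from the strict decrease of the ratio sequence, and the same single use of $f(0)\geq 1$ to absorb the factor $1/f(0)$ at the end. The alternative remark via Proposition \ref{corollary: log-concave => B-O} with $n_0=1$ is a harmless repackaging of the same argument.
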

\begin{proof}
    The log-concavity property and the assumption that $f(0)\geq1$ imply that
    \begin{align*}
        f(a+b)&=\frac{f(a+b)}{f(a+b-1)}\times\frac{f(a+b-1)}{f(a+b-2)}\times\cdots\times\frac{f(a+1)}{f(a)}f(a)\\
        &<f(a)\frac{f(b)}{f(b-1)}\times\frac{f(b-1)}{f(b-2)}\times\cdots\times\frac{f(1)}{f(0)}f(0)=f(a)f(b)
    \end{align*}
    holds for all positive integers $a$ and $b$. This completes the proof.
\end{proof}
\begin{ex}
{\rm    Since the sequence $(p(n))_{n\geq26}$ is log-concave by \cite[Theorem 1.1]{DSP} and we have $p(26)=2436>1$, Theorem \ref{Proposition: log-concavity => B-0 (2)} implies that the sequence $(p(n+26))_{n\in\N}$ satisfies the Bessenrodt-Ono type inequality. 
    }
\end{ex}

\begin{ex}
{\rm Let us set $q(n):=F_{2n}$, where $F_j$ denotes the $j$-th Fibonacci number.  From Cassini identity we deduce that
$$
q^2(n)-q(n+1)q(n-1)=F_{2n}^2-F_{2n-2}F_{2n+2}=1>0.
$$
Thus, the sequence $(q(n))_{n\in\N}$ is log-concave. However, it does not satisfy the Bessenrodt-Ono type inequality. Indeed, one can easily check that $\phi^{2n}/\sqrt{5}-1<F_{2n}<\phi^{2n}/\sqrt{5}$, where $\phi=(1+\sqrt{5})/2$ is the golden mean. Thus, for $a+b\geq 3$ we have
$$
q(a+b)-q(a)q(b)=F_{2(a+b)}-F_{2a}F_{2b}>\frac{\sqrt{5}-1}{5}\phi^{2(a+b)}-1>0.
$$
As a consequence, despite the fact that the sequence $(q(n))_{n\in\N}$ is log-concave, it does not satisfy the Bessenrodt-Ono type inequality. In conclusion, we see that the assumption $f(0)\geq 1$ in Proposition \ref{Proposition: log-concavity => B-0 (2)} is necessary.

On the other hand, let us note that for any fixed $j\in\N_{+}$ the sequence $(q(n+j))_{n\in\N}$ satisfies the Bessenrodt-Ono type inequality.
 


}
\end{ex}

Theorem \ref{proposition: log-concavity => B-O} asserts that a positive log-concave sequence usually satisfies the Bessenrodt-Ono inequality. Therefore, there appears a natural question whether the inverse statement is true. In general, it is not the case as the following example shows.

\begin{ex}
{\rm    For an arbitrary positive integer $m\geq2$, the $m$-ary partition function $b_m(n)$ is just $p_{A}(n)$ for $A=\{m^{i}:\;i\in\N\}$, i.e., $b_{m}(n)$ is the number of representations of $n$ as sums of powers of $m$. Let us recall that $b_{m}(0)=1, b_{m}(mn+i)=b_{m}(mn)$ for $i=1, \ldots, m-1$ and $b_{m}(mn)=b_{m}(mn-1)+b_{m}(n)$. 

The sequence $(b_m(n))_{n\in\N}$ is not log-concave. Indeed, if we have that $n\equiv-1\pmod*{m}$, then $b_m(n-1)=b_m(n)<b_m(n+1)$.

On the other hand, the following is true.

\begin{thm}
 The sequence $(b_m(n))_{n\in\N}$ satisfies the Bessenrodt-Ono type inequality for every $m\geq2$.
\end{thm}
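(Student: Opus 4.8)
The plan is to establish the Bessenrodt-Ono type inequality $b_m(a)b_m(b) > b_m(a+b)$ directly from the recursion, without appealing to any asymptotic growth estimates. The key structural fact is the recursion $b_m(0)=1$, $b_m(mn+i)=b_m(mn)$ for $i=1,\dots,m-1$, and $b_m(mn)=b_m(mn-1)+b_m(n)$, from which one reads off that $(b_m(n))_{n\in\N}$ is nondecreasing and satisfies $b_m(n)\geq 1$ for all $n$. First I would record the crude but essential growth bound: since $b_m(mn)=b_m(mn-1)+b_m(n)\leq b_m(mn)$ forces $b_m$ to grow, one gets for instance $b_m(n)\leq b_m(mn)$ and, iterating, polynomial-type upper bounds. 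More usefully, I would prove by strong induction a bound of the shape $b_m(n)\leq C n^{\log_m 2}$ (or even just $b_m(n)=O(n)$, which already suffices if $m\geq 2$), since $\log_m 2\leq 1$.

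Next I would set up the main induction on $a+b$, assuming without loss of generality $a\geq b\geq 1$. The base cases are the small values, handled either by the monotonicity of $b_m$ together with $b_m(1)=\dots=b_m(m-1)=1$, $b_m(m)=2$, or by direct computation for $a+b$ below a fixed threshold. For the inductive step, write $a+b$ in terms of the recursion: if $a+b$ is not divisible by $m$, say $a+b = mk+i$ with $1\leq i\leq m-1$, then $b_m(a+b)=b_m(mk)$, and I would reduce to the case of a multiple of $m$ by adjusting one of $a,b$ (replacing, say, $a$ by $a-i$ if $a>i$, using monotonicity $b_m(a-i)\leq b_m(a)$ and noting $b_m((a-i)+b)=b_m(mk)=b_m(a+b)$). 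If $a+b=mk$, then $b_m(a+b)=b_m(mk-1)+b_m(k)$; here I would bound $b_m(mk-1)\leq b_m(mk)$ is circular, so instead peel the last few terms of the recursion to write $b_m(mk)=b_m(m(k-1))+\sum$ of $b_m$ at smaller arguments, telescoping $b_m(mk)-b_m(m\lfloor b/m\rfloor \text{-ish})$ as a sum of $b_m(j)$ for $j$ ranging over roughly $\lfloor a/m\rfloor$ to $k$.

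The heart of the argument will be a telescoping identity: summing $b_m(mj)-b_m(mj-1)=b_m(j)$ over an appropriate range $j$ gives $b_m(m\ell) - b_m(ms) = \sum_{j=s+1}^{\ell} b_m(j)$ (using that $b_m$ is constant on each block $[mj, mj+m-1]$). Applying this with $\ell=k=(a+b)/m$ and $s=\lfloor a/m\rfloor$, and comparing $\sum_{j} b_m(j)$ over an interval of length about $b/m$ with the product $b_m(a)b_m(b)$, one uses monotonicity to bound each summand by $b_m(k)\leq b_m(a+b)/2\cdot(\text{something})$ — more precisely by $b_m$ at the top, so the sum is at most $(b/m)\cdot b_m(a)$ roughly, and then one needs $b_m(b)$ to dominate $b/m$ plus the leftover $b_m(ms)\leq b_m(a)$ term; since $b_m(b)\to\infty$ and in fact $b_m(b)\geq c\cdot b/m$ for a suitable constant once $b$ is moderately large (again provable by the recursion), this closes. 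The main obstacle I anticipate is making the block-structure bookkeeping clean — the function $b_m$ is locally constant in blocks of length $m$, so the arguments $a$, $b$, $a+b$ interact awkwardly with divisibility by $m$, and one must carefully choose which variable to perturb and verify the finitely many small cases (the threshold will depend on $m$, so either give a uniform bound or argue the exceptional set is finite for each fixed $m$). An alternative, possibly cleaner route that I would also consider: prove the two-sided estimate $c_1 n^{\log_m 2} \leq b_m(n) \leq c_2 n^{\log_m 2}$ with explicit constants and then invoke Theorem \ref{theorem: B-O (2) modified} with $f\equiv 0$ (or $f$ a constant), reducing everything to the inequality $\log_m 2\leq 1$ which gives sub-additivity of $n\mapsto n^{\log_m 2}$ up to constants — but the constants from the recursion are not tight enough near the threshold, so the induction-based proof is likely what the authors use, and I would present that.
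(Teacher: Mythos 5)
Your central computation --- the telescoping identity $b_m(m\ell)-b_m(ms)=\sum_{j=s+1}^{\ell}b_m(j)$, applied with $\ell=\lfloor(a+b)/m\rfloor$, $s=\lfloor a/m\rfloor$ and combined with the monotonicity and block-constancy of $b_m$ --- is correct and does yield a complete elementary proof: it gives $b_m(a+b)\le\bigl(\tfrac{b}{m}+2\bigr)b_m(a)$ for $a\ge b$ (each of the at most $b/m+1$ summands is at most $b_m(\lfloor(a+b)/m\rfloor)\le b_m(a)$, and the leftover term $b_m(m\lfloor a/m\rfloor)\le b_m(a)$), while the same identity, with all but the first $m-1$ summands at least $2$, gives $b_m(b)\ge 2\lfloor b/m\rfloor-m+2$, which exceeds $b/m+2$ once $b>m^2+2m$. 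No induction on $a+b$ and no case analysis on $a+b\bmod m$ is actually needed. This is a genuinely different route from the paper, which invokes Mahler's theorem $\log b_m(n)\sim(\log n)^2/(2\log m)$ to obtain two-sided bounds $c_i(n)e^{(\log n)^2/(2\log m)}$ with $c_i(n)\to1$ and then feeds $f(n)=(\log n)^2/(2\log m)$, a suitable $g$, and $h\equiv4$ into Theorem \ref{theorem: B-O (2) modified}. Your (corrected) argument is more elementary and effective; the paper's is shorter modulo the deep asymptotic input and illustrates its general criterion.

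That said, the growth estimates you call ``crude but essential'' are false, and you must excise them. The function $b_m$ is super-polynomial: from your own telescoping identity, $b_m(mN)=1+\sum_{j=1}^{N}b_m(j)$, so any bound $b_m(n)\ge cn^{d}$ on an initial range bootstraps to $b_m(n)\ge c'n^{d+1}$ on a larger one, and Mahler's theorem indeed gives $b_m(n)=n^{(1/2+o(1))\log_m n}$. Hence $b_m(n)=O(n)$ and $b_m(n)\le Cn^{\log_m 2}$ are both wrong, and the ``alternative, possibly cleaner route'' via a two-sided estimate $c_1n^{\log_m 2}\le b_m(n)\le c_2n^{\log_m 2}$ cannot work --- its failure is structural, not a matter of constants near the threshold. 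Fortunately none of this is used in your main line: there the summands are controlled by monotonicity alone, and only a lower bound of the form $b_m(b)\ge c\,b/m$ with $c>1$ is required, which is true. Strike the false upper bounds and the polynomial-estimate detour, keep the telescoping argument, and the proof stands.
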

\begin{proof}
In order to get the statement we apply Mahler's theorem (see \cite{De Bruijn, Mahler, Pennington}), which states that
    \begin{align*}
        \log b_m(n)\sim\frac{\left(\log n\right)^2}{2\log m}.
    \end{align*}
    In particular, it means that there are some functions $c_1,c_2:\mathbb{N}_+\to\mathbb{R}$ such that
    \begin{align*}
        \lim_{n\to\infty}c_1(n)=\lim_{n\to\infty}c_2(n)=1,
    \end{align*}
    and the inequalities
    \begin{align*}
        c_1(n)e^\frac{\left(\log n\right)^2}{2\log m}<b_m(n)<c_2(n)e^\frac{\left(\log n\right)^2}{2\log m}
    \end{align*}
    hold for all $n\geq N$ for some positive integer $N$.

To get the result we apply Theorem \ref{theorem: B-O (2) modified}. Let us put $f(n):=(\log n)^2/(2\log m)$. Without loss of generality we assume that $a\geq b$. Then we set $a=\lambda b$ for some $\lambda\geq 1$. We have that
\begin{align*}
        \log^2 a +\log^2 b-\log^2 (a+b)&= \log^2 (\lambda b)+\log^2 b-\log^2((\lambda+1)b)\\
        &=\log^2 b+2(\log b)\log\frac{\lambda}{\lambda+1}+\log^2 \lambda-\log^2 (\lambda+1)\\
        &>\left(\log b-2\log 2\right)\log b-\frac{3}{2},
\end{align*}
where the last inequality follows from elementary calculations. Therefore, one can put
    \begin{align*}
        g(n):=\frac{1}{2\log m}\left(\left(\log n-2\log 2\right)\log n-\frac{3}{2}\right).
    \end{align*}

    To find out an appropriate function $h(n)$ from Theorem \ref{theorem: B-O (2) modified}, we observe that for some $M\in\N$ and all $n\geq M$, the inequalities
    \begin{align*}
        c_1(n)\geq \frac{1}{2} \hspace{0.2cm}\text{and}\hspace{0.2cm}c_2(n)\leq2
    \end{align*}
    are true. Hence, if $a\geq b\geq M$, then
    \begin{align*}
        \frac{c_2(a+b)}{c_1(a)}\leq 4.
    \end{align*}
    Now, it is clear that the following
    \begin{align*}
        \frac{1}{2\log m}\left(\left(\log n-2\log 2\right)\log n-\frac{3}{2}\right)\geq 3\log2\geq \log4-\log c_1(n)
    \end{align*}
    is valid for all but finitely many values of $n$. In consequence, Theorem \ref{theorem: B-O (2) modified} guarantees that the sequence $(b_m(n))_{n\in\N}$ satisfies the Bessenrodt-Ono type inequality, as required.
\end{proof}
}
\end{ex}

\section*{Acknowledgments}
The first author would like to thank Bernhard Heim for fruitful discussion and valuable suggestions. Moreover, he is also supported by a grant from the Faculty of Mathematics and Computer Science under the Strategic Program Excellence Initiative at the Jagiellonian University in Kraków.

\end{document}